\documentclass{article}
\usepackage{maa-monthly}
\usepackage[final]
{showkeys}
\usepackage{graphicx}
\usepackage{enumerate}
\usepackage{multirow}
\usepackage[skip=40pt,font=large]{caption}
\usepackage{framed}

\usepackage[framemethod=tikz]{mdframed}


\usepackage{trimclip}


\usepackage{needspace}


\usepackage{hyperref}

%
%
%


\theoremstyle{theorem}
\newtheorem{theorem}{Theorem}
\newtheorem{proposition}[theorem]{Proposition}

\newtheorem{lemma}[theorem]
{Lemma}

\theoremstyle{definition}

\newtheorem{remark}[theorem]
{Remark}

\theoremstyle{definition} 
\newtheorem*{remark*}{Remark}

\renewcommand{\skip}[1]{}

\newcommand{\N}{\mathbb{N}}
\newcommand{\R}{\mathbb{R}}

\newcommand{\E}{\operatorname{\mathsf{E}}}
\renewcommand{\P}{\operatorname{\mathsf{P}}}

\newcommand{\1}{\operatorname{\mathsf{1}}}

\newcommand{\al}{\alpha}

\renewcommand{\O}{\underset{\ffrown}{<}}
\newcommand{\OG}{\underset{\ffrown}{>}}

\makeatletter
\DeclareRobustCommand{\lasymp}{\lg@asymp{<}}
\DeclareRobustCommand{\gasymp}{\lg@asymp{>}}

\newcommand{\under@asymp}[1]{\clipbox{0pt 0pt 0pt {0.5\height}}{$\m@th#1\asymp$}}
\newcommand{\lg@asymp}[1]{\mathrel{\mathpalette\lg@asymp@{#1}}}
\newcommand{\lg@asymp@}[2]{%
  \vcenter{%
    \offinterlineskip
    \m@th
    \ialign{%
      \hfil##\hfil\cr
      $#1#2$\cr
      \under@asymp{#1}\cr
    }%
  }%
}
\makeatother

\renewcommand{\O}{\lasymp}
\renewcommand{\OG}{\gasymp}

\makeatletter

\long\def\@maketablecaption#1#2{%
    \rule[.1\baselineskip]{0pt}{\baselineskip}
    \small\textbf{#1.}\enspace #2\strut
    \par
  \vskip2pt}
\makeatother  

    \setlength{\parskip}{0cm}
    \setlength{\parindent}{1em}

\begin{document}

\title{
Large Deviations Of Sums 
Mainly Due To Just One Summand}
\markright{
Large Deviations 
Mainly Due To Just One Summand}
\author{Iosif Pinelis
}





\maketitle

\begin{abstract} 
We present a formalization of the well-known thesis that, in the case of independent identically distributed random 
variables $X_1,\dots,X_n$ with power-like tails of index $\al\in(0,2)$,
large deviations of the sum $X_1+\dots+X_n$ are 
primarily due to just one of the summands.   
\end{abstract}






\section{Introduction, summary, and discussion.}\label{intro}

Let $X_1,X_2,\dots$ be \break independent identically distributed 
random variables. For each natural $n$, let \break 
$S_n:=\sum_1^n X_i$. 

Heyde \cite{heyde68} showed the following: Suppose that, for some sequence $(B_n)$ of positive real numbers, $S_n/B_n$ converges in distribution to a stable law of index \break  $\al\in(0,2)\setminus\{1\}$, whose support is the entire real line $\R$. 
(
For a definition and basic properties of stable laws, see e.g.\ \cite[\S IV.3]{pet75}.)
Then, for any sequence $(x_n)$ going \break 
to $\infty$, 
\begin{equation}\label{eq:heyde}
	\P(|S_n|>x_nB_n)\sim\P(\max_{1\le i\le n}|X_i|>x_nB_n). 
\end{equation}
%

As indicated in \cite{heyde68}, one-sided analogs of \eqref{eq:heyde} could also be obtained, even in the case $\al=1$. However, such a task would involve additional technical difficulties.
 
%
%
The conditions in \cite{heyde68} for \eqref{eq:heyde} imply that the tail of the distribution of each $X_i$ is power-like -- more specifically, 
\begin{equation}\label{eq:powertails}
	\P(|X_1|>u)=u^{-\al+o(1)}\text{\quad as\quad $u\to\infty$. }
\end{equation}

This work by Heyde was followed by a large number of publications, including 
\cite{nagaev.a.69I,nagaev.a.69II,
pin85,borovkov03}. 

The 
asymptotic equivalence \eqref{eq:heyde} and, especially, its proof suggest the well-known interpretation that, in the cases of power-like tails as in \eqref{eq:powertails}, large deviations of the sum $S_n$ are 
mainly due to just one of the summands $X_1,\dots,X_n$. 


In this note, we present a formal version of this interpretation:   
\begin{theorem}\label{th:}
Take any $\al\in(0,2)$. Let $X_1,X_2,\dots$ and $S_n$ be as in the first paragraph of this note. 
To avoid technicalities, suppose that the distribution of $X_1$ is symmetric about $0$ and has a probability density function $f$ such that 
\begin{equation}\label{eq:f}
	f(u)
	\asymp u^{-1-\al} \text{\quad as\quad $u\to\infty$ }
\end{equation}
(cf.\ \eqref{eq:powertails}). 
Then 
\begin{equation}\label{eq:sim}
	\P(S_n>x)\sim
	\P\Big(S_n>x,\,\bigcup_{i\in[n]}\big\{X_i>x,|S_n-X_i|\le bx,\,\max_{j\in[n]\setminus\{i\}}|X_j|\le cx\big\}\Big)
\end{equation}
whenever $n\in\N$, $x\in(0,\infty)$, $c\in(0,1)$, and $b\in(0,1)$ vary in such a way that 
\begin{align}
	n&<<x^\al, \label{eq:LD} \\ 
	nx^{-\al}&<<c^{2\al}, \label{eq:c} \\ 
nx^{-\al}&<<	b^2 c^{\al-2}.  \label{eq:b}
\end{align}
\end{theorem}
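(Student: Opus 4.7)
The overall plan is short: since the event inside the right-hand probability of \eqref{eq:sim} is contained in $\{S_n>x\}$, and the classical one-big-jump heavy-tail result gives $\P(S_n>x)\sim n\,\P(X_1>x)\asymp nx^{-\al}$ when $n=o(x^\al)$ (available here as a one-sided version of Heyde's \eqref{eq:heyde} combined with symmetry, valid at least for $\al\ne 1$), it suffices to establish the matching asymptotic $\P(\mathrm{RHS})\sim n\P(X_1>x)$ for the right-hand side.

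Writing $G_i$ for the $i$-th event in the union on the right of \eqref{eq:sim}, first observe that the $G_i$ are pairwise disjoint: if $G_i\cap G_j\ne\varnothing$ with $i\ne j$, then $X_j>x>cx$ would violate the max-condition in $G_i$. Hence $\P(\mathrm{RHS}) = n\,\P(G_1)$, and by independence of $X_1$ from $(X_2,\dots,X_n)$,
\[
\P(G_1) = \P(X_1>x)\cdot \P\Big(\max_{j\ge 2}|X_j|\le cx,\ \Big|\sum_{j\ge 2}X_j\Big|\le bx\Big).
\]
The second factor tends to $1$: a union bound with \eqref{eq:f} gives $\P(\max_{j\ge 2}|X_j|>cx) \lesssim n(cx)^{-\al} = c^{-\al}\cdot nx^{-\al}\ll c^\al \to 0$, using \eqref{eq:c}; setting $Y_j := X_j\,\ii{|X_j|\le cx}$, symmetry and $\al<2$ give $\E Y_j^2 \asymp (cx)^{2-\al}$, so Chebyshev yields $\P(|\sum_{j\ge 2}Y_j|>bx) \lesssim n(cx)^{2-\al}/(bx)^2 = b^{-2}c^{2-\al}\cdot nx^{-\al} \ll 1$ by \eqref{eq:b}. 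Combining, $\P(\mathrm{RHS}) = n\P(X_1>x)(1+o(1))$, which matches $\P(S_n>x)$ and completes the proof.

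The main technical point is the invocation of $\P(S_n>x)\sim n\P(X_1>x)$. For $\al\ne 1$ this is Heyde's \eqref{eq:heyde} plus symmetry; for $\al=1$ (or for a uniform self-contained derivation) one would decompose $\{S_n>x\}\setminus\mathrm{RHS}$ according to $J:=\{i:X_i>x\}$ and $L:=\{i:|X_i|>cx\}$, handling the cases $|J|\ge 2$, $|L|\ge 2$, $|J|=1$ with a second large summand, and $|J|=1$ with $|S_n-X_{i_0}|>bx$ by union-bound and Chebyshev estimates mirroring the RHS calculation. The residual ``no-large-summand'' case $\{S_n>x,\max_i|X_i|\le cx\}$ requires Bennett's inequality applied to $\tilde S_n := \sum Y_i$ (per-summand bound $M=cx$, variance $n\sigma^2\asymp n(cx)^{2-\al}$, parameter $u:=xM/(n\sigma^2)\asymp c^{\al-1}/(nx^{-\al})\to\infty$ by \eqref{eq:c}), producing a bound of order $(nx^{-\al})^{1/c}$ that is $o(nx^{-\al})$ since $c<1$. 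Care is required in the borderline regime $c\to 1$, where one must keep the full non-asymptotic form of $h(u)=(1+u)\log(1+u)-u$ in Bennett (or introduce a secondary truncation strictly below $cx$) to ensure the $o(nx^{-\al})$ bound holds in all parameter regimes permitted by \eqref{eq:LD}--\eqref{eq:b}.
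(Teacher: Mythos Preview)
Your plan hinges on the asymptotic $\P(S_n>x)\sim n\,\P(X_1>x)$, which you propose to import from Heyde's result \eqref{eq:heyde}. That citation is not valid here: the theorem assumes only $f(u)\asymp u^{-1-\al}$, which gives $\P(X_1>u)\asymp u^{-\al}$ but \emph{not} regular variation of the tail, hence not membership in the domain of attraction of a stable law (e.g.\ $\P(X_1>u)=u^{-\al}(2+\sin\log u)$ is compatible with the hypotheses for suitable $\al$ but is not regularly varying). Heyde also requires $n\to\infty$, whereas the theorem allows $n$ fixed (cf.\ Remark~\ref{rem:n}). So the relation $\P(S_n>x)\sim n\P(X_1>x)$ is not a prerequisite you can quote; it is essentially the content of the theorem and must be proved from scratch. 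Indeed, the paper never establishes it in the $\sim$ form---it proves directly that $\P(S_n>x)\sim p_{1,1,+}(n,x)$ by combining the lower bound $\P(S_n>x)\OG nx^{-\al}$ (via Feller's symmetrization inequality) with upper bounds $<<nx^{-\al}$ on the complementary pieces of a five-way decomposition of $\{S_n>x\}$.

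Your third paragraph sketches such a decomposition, but it omits a case: when exactly one summand satisfies $|X_i|>cx$ yet $X_i\le x$ (so $|L|=1$, $|J|=0$ in your notation). This splits into $cx<|X_i|\le x$ and $X_i<-cx$; the former is the paper's $p_{1,0}(n,x)$ and is in fact the longest estimate there, requiring an integration over the value of the single moderate summand and a three-range split of the resulting integral. Union-bound/Chebyshev alone do not dispose of it.

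Two smaller issues. First, your factorization $\P(G_1)=\P(X_1>x)\cdot\P(\max_{j\ge2}|X_j|\le cx,\ |\sum_{j\ge2}X_j|\le bx)$ silently drops the constraint $S_n>x$; as written it is only an upper bound for $\P(G_1)$, and the missing direction does not follow without further argument (when $b$ is bounded away from $0$ the gap is a genuine constant factor). Second, after the harmless reduction to $c<<1$ (monotonicity of the right-hand side in $c$; cf.\ Remark~\ref{rem:p,tp}), Chebyshev already gives $p_0(n,x)\O c^{2-\al}nx^{-\al}<<nx^{-\al}$, so Bennett's inequality and your ``borderline $c\to1$'' discussion are unnecessary.
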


Here, as usual, $\N:=\{1,2,\dots\}$ and $[n]:=\{1,\dots,n\}$ for $n\in\N$.  
%
For positive expressions $E$ and $F$ (in terms of $x,n,c,b$), 
we write (i) $E\sim F$ if $E/F\to1$; \break 
(ii) $E<<F$ or, equivalently, $F>>E$ if $E=o(F)$
---that is, if $E/F\to0$; \break 
(iii) $E\O F$ or, equivalently, $F\OG E$ if 
$\limsup E/F<\infty$; and (iv) $E\asymp F$ if $E\O F\O E$. The ``much smaller than'' sign $<<$ should not be confused with Vinogradov's symbol $\ll$ 
(the latter is usually taken to mean the same as $\O$). 

\begin{proposition}\label{prop:}
For $S_n$ as in Theorem~\ref{th:} and for all $n\in\N$ and $x\in(0,\infty)$, we have $\P(S_n>x)\to0$ if and only if condition \eqref{eq:LD} holds. 
Moreover, if either \eqref{eq:LD} holds or $\P(S_n>x)\to0$, then $\P(S_n>x)\asymp nx^{-\al}$. 
\end{proposition}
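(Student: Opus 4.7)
The plan is to establish matching two-sided bounds $\P(S_n>x)\asymp nx^{-\al}$ in the regime $nx^{-\al}\to 0$, which will yield both the ``moreover'' assertion and the easy direction of the ``iff'', and then to handle the converse direction by Lévy's maximal inequality.

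For the upper bound $\P(S_n>x)\O nx^{-\al}$, I would truncate at level $x$: with $Y_i:=X_i\,\ii{|X_i|\le x}$,
\[
\{S_n>x\}\subseteq\Big\{\max_i|X_i|>x\Big\}\cup\Big\{\sum_i Y_i>x\Big\}.
\]
The first event has probability at most $n\P(|X_1|>x)\O nx^{-\al}$ (by integrating $f(u)\asymp u^{-1-\al}$); the $Y_i$ are iid, symmetric, and bounded by $x$, with $\Var Y_1\O\int_0^x u^{1-\al}\dd u\asymp x^{2-\al}$, so Chebyshev gives $\P(\sum_i Y_i>x)\le n\Var Y_1/x^2\O nx^{-\al}$. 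For the matching lower bound (assuming $nx^{-\al}\to 0$), set $p:=\P(X_1>2x)\asymp x^{-\al}$ and decompose
\[
\P(S_n>x)\ge \P(\max_i X_i>2x)-\P(\max_i X_i>2x,\,S_n\le x).
\]
The first term equals $1-(1-p)^n\sim np$ since $np\to 0$. On the event $\{X_i>2x,\,S_n\le x\}$ the complementary sum $\sum_{j\ne i}X_j$ must be $<-x$, so independence and symmetry bound the second term by $np\,\P(S_{n-1}>x)=np\cdot o(1)$ via the upper bound just proved. Hence $\P(S_n>x)\ge np(1-o(1))\OG nx^{-\al}$.

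For the ``iff'', the implication $nx^{-\al}\to 0\Rightarrow\P(S_n>x)\to 0$ is immediate from the upper bound. For the converse I would invoke Lévy's inequality for symmetric independent sums, $\P(\max_k|S_k|>x)\le 2\P(|S_n|>x)=4\P(S_n>x)$, together with $|X_i|\le 2\max_k|S_k|$, to conclude
\[
\P(S_n>x)\ge\frac{1}{4}\,\P\bigl(\max_i|X_i|>2x\bigr)=\frac{1}{4}\bigl(1-(1-\P(|X_1|>2x))^n\bigr).
\]
If $nx^{-\al}\not\to 0$, then along a subsequence $n\P(|X_1|>2x)$ is bounded away from $0$ (it is $\OG nx^{-\al}$ when $x\to\infty$, and bounded below by a positive multiple of $n$ when $x$ stays bounded, by positivity of the tail of $f$), so the right-hand side does not tend to $0$, contradicting $\P(S_n>x)\to 0$.

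The substantive step is this application of Lévy's inequality: the truncation upper bound is useless once $nx^{-\al}\gasymp 1$, so one needs a maximal inequality to turn the easy tail estimate on $\max_i|X_i|$ into a matching lower bound on $\P(S_n>x)$. All remaining steps are routine truncation, Chebyshev, and union bounds.
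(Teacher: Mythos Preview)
Your argument is correct. The route differs from the paper's chiefly in the upper bound: the paper obtains $\P(S_n>x)\O nx^{-\al}$ indirectly, by invoking the full five-piece decomposition from the proof of Theorem~\ref{th:} (specifically, the intermediate conclusion $\P(S_n>x)\sim p_{1,1,+}(n,x)\le n\P(X_1>x)$, which rests on Lemma~\ref{lem:<<} and hence on a well-chosen auxiliary $c$). Your truncation at level $x$ plus Chebyshev is a clean, self-contained substitute that avoids all of that machinery; it also makes transparent that the upper bound needs only $x\to\infty$, which you then reuse for $S_{n-1}$. For the lower bound and the converse, the two proofs are essentially the same idea in different clothing: the paper cites Feller's inequality~V~(5.10), which yields $\P(S_n>x)\ge\tfrac14(1-e^{-2n\P(X_1>x)})$ directly, while you reach the analogous bound via L\'evy's maximal inequality and $|X_i|\le 2\max_k|S_k|$ (at the harmless cost of a factor $2$ in the threshold). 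Your separate ``one big jump'' decomposition for the lower bound under \eqref{eq:LD} is a pleasant alternative, though once you have the L\'evy bound it is not strictly needed. Net effect: your proof is more elementary and independent of Theorem~\ref{th:}; the paper's is shorter on the page because it recycles work already done.
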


\begin{remark}\label{rem:all LD}
Condition $\P(S_n>x)\to0$ means precisely that $\P(S_n>x)$ is a large-deviation probability for $S_n$. So, in view of Proposition~\ref{prop:}, Theorem~\ref{th:} concerns all the large deviations of $S_n$. 
\qed
\end{remark}

\begin{remark}\label{rem:p,tp} 
Given \eqref{eq:c}, for \eqref{eq:b} to hold it is enough that $b\asymp c$ or even $b\OG c^{1+\al/2}$.  
Therefore and because the probability on the right-hand side of \eqref{eq:sim} is non-decreasing in $c$ and in $b$, without loss of generality 
\begin{equation}\label{eq:c<<1}
	c<<1 \quad\text{and}\quad b<<1. 
\end{equation}
So, \eqref{eq:sim} shows 
that the large deviation event $\{S_n>x\}$ is mainly due to just one of the summands $X_1,\dots,X_n$. More specifically, \eqref{eq:sim} tells us that, given $S_n>x$, the conditional probability that 
exactly one of the $X_i$'s is $>x$ while the absolute values of the other $X_i$'s and of the sum of the other $X_i$'s are all $o(x)$ is close to $1$. \qed
\end{remark}

\begin{remark}\label{rem:n}
In contrast with \eqref{eq:heyde}, the condition $n\to\infty$ is not required in Theorem~\ref{th:}; in particular, $n$ may be fixed there. However, 
it is clear that condition \eqref{eq:LD} in Theorem~\ref{th:} 
necessarily implies that 
$x\to\infty$. 
In another distinction from \eqref{eq:heyde}, in Theorem~\ref{th:} the common distribution of the $X_i$'s is not required to be in the domain of attraction of a stable law. \qed
\end{remark}

\section
{Proofs.}\label{proofs}
\begin{proof}[Proof of Theorem~1]
This proof is based on two lemmas. To state the lemmas, let us introduce the following notations: 
\begin{align}
	p_0(n,x):=&
	\P\Big(S_n>x,\,\max_{j\in[n]}|X_j|\le cx\Big), \label{eq:p0} \\ 
	p_{\ge2}(n,x):=&
	\P\Big(S_n>x,\,\bigcup_{i\in[n]}\,\bigcup_{j\in[n]\setminus\{i\}}\{|X_i|>cx,|X_j|>cx\}\Big),  
\label{eq:p2} \\ 
	p_{1,0}(n,x):=&
	\P\Big(S_n>x,\,\bigcup_{i\in[n]}\big\{cx<|X_i|\le x,\,\max_{j\in[n]\setminus\{i\}}|X_j|\le cx\big\}\Big), \label{eq:p10} \\ 
	p_{1,1,-}(n,x):=
&\P\Big(S_n>x,\,\bigcup_{i\in[n]}\big\{X_i<-x,\,\max_{j\in[n]\setminus\{i\}}|X_j|\le cx\big\}\Big), \label{eq:p11-} \\  
	p_{1,1,+}(n,x)
	:=&\P\Big(S_n>x,\,\bigcup_{i\in[n]}\big\{X_i>x,\max_{j\in[n]\setminus\{i\}}|X_j|\le cx\big\}\Big).  \label{eq:tp} 
\end{align}

\begin{lemma}\label{lem:>} 
For $n$ and $x$ as in the conditions of Theorem~\ref{th:} 
(that is, for $n\in\N$ and $x\in(0,\infty)$ such that \eqref{eq:LD} holds), we have 
\begin{equation}\label{eq:>}
	\P(S_n>x)\OG
	n\P(X_1>x)\asymp
	nx^{-\al}. 
\end{equation}
\end{lemma}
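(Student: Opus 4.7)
The second relation in \eqref{eq:>} is routine: integrating \eqref{eq:f} gives $\P(X_1>x)=\int_x^\infty f(u)\,\dd u\asymp x^{-\al}$ as $x\to\infty$, and since $n\ge1$, condition \eqref{eq:LD} forces $x^\al\to\infty$, so this piece requires no further work.

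For the substantive bound $\P(S_n>x)\OG nx^{-\al}$, my plan is a one-large-summand lower bound with a Bonferroni second-order correction. The case $n=1$ is immediate, so I would assume $n\ge2$. For each $i\in[n]$ introduce
\[
E_i:=\{X_i>2x,\ S_n-X_i>-x\},
\]
so that $E_i\subseteq\{S_n>x\}$ and hence $\P(S_n>x)\ge\P(\bigcup_{i\in[n]}E_i)$. Because $X_i$ is independent of $S_n-X_i=\sum_{j\ne i}X_j$ and the latter is symmetric about $0$ with a continuous distribution,
\[
\P(E_i)=\P(X_1>2x)\,\P(S_{n-1}>-x)\ge\tfrac12\P(X_1>2x),
\]
while for $i\ne j$ the crude bound $\P(E_i\cap E_j)\le\P(X_i>2x,X_j>2x)=\P(X_1>2x)^2$ will suffice.

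Bonferroni's inequality then gives
\[
\P(S_n>x)\ge\tfrac{n}{2}\P(X_1>2x)-\tbinom{n}{2}\P(X_1>2x)^2.
\]
Since $\P(X_1>2x)\asymp x^{-\al}$, hypothesis \eqref{eq:LD} makes $n\P(X_1>2x)\to0$, so the subtracted term is of order $o\bigl(n\P(X_1>2x)\bigr)$. Consequently $\P(S_n>x)\OG n\P(X_1>2x)\asymp nx^{-\al}$, which combined with the opening paragraph yields \eqref{eq:>}.

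I do not anticipate a genuine obstacle: the argument is driven by the observation that under \eqref{eq:LD} the events $\{X_i>2x\}$ are so sparse that coincidences between two of them are negligible. The only bookkeeping is to replace $\P(X_1>2x)$ by $\P(X_1>x)$ at the end (harmless since both are $\asymp x^{-\al}$) and to note the $n=1$ case separately (trivial, as $\P(S_1>x)=\P(X_1>x)$).
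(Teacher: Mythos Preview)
Your argument is correct. The tail estimate $\P(X_1>x)\asymp x^{-\al}$ and the observation that \eqref{eq:LD} forces $x\to\infty$ are exactly as in the paper. For the main inequality your Bonferroni/one-big-jump lower bound goes through cleanly: the events $E_i$ are indeed contained in $\{S_n>x\}$, the symmetry and absolute continuity of $S_{n-1}$ give $\P(S_{n-1}>-x)\ge\tfrac12$, and the pairwise overlaps are controlled by $\P(X_1>2x)^2$, so the subtracted term is $o\bigl(n\P(X_1>2x)\bigr)$ under \eqref{eq:LD}.

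The paper takes a different, shorter route: it invokes a Feller--L\'evy type inequality \cite[inequality~V,~(5.10)]{feller_vol2}, which for symmetric absolutely continuous summands yields $\P(S_n>x)\ge\tfrac14\bigl(1-e^{-2n\P(X_1>x)}\bigr)$; since $n\P(X_1>x)\to0$, the right-hand side is $\sim\tfrac12\,n\P(X_1>x)$. Your approach is more elementary and fully self-contained, at the cost of a few extra lines and the minor artifice of the threshold $2x$; the paper's approach is a one-liner once the cited inequality is granted, and it delivers the factor $\P(X_1>x)$ directly rather than $\P(X_1>2x)$. Either way the conclusion is the same.
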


\begin{proof}
By \eqref{eq:f}, 
\begin{equation}\label{eq:p1}
	\P(X_1>u)\asymp u^{-\al}\text{\quad as\quad $u\to\infty$.} 
\end{equation}
So, in view of 
\eqref{eq:LD}, $n\P(X_1>x)\asymp nx^{-\al}<<1$. Now \eqref{eq:>} follows from \cite[inequality~V, (5.10)]{feller_vol2}, 
which immediately 
implies $\P(S_n>x) 
\ge
\tfrac14\,(1-e^{-2n\P(X_1>x)})$ (since the distribution of $X_1$ is symmetric and absolutely continuous). 
\end{proof}

\begin{lemma}\label{lem:<<}
For $n$, $x$, and $c$ as in the conditions of Theorem~\ref{th:}, 
\begin{align}
	p_0(n,x)&<<nx^{-\al}, \label{eq:0} \\ 
	p_{\ge2}(n,x)&<<nx^{-\al}, \label{eq:ge2} \\ 
	p_{1,0}(n,x)&<<nx^{-\al}, \label{eq:1,0} \\ 
	p_{1,1,-}(n,x)&<<nx^{-\al}. \label{eq:1,1,-} 
\end{align}
\end{lemma}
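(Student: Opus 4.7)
The plan is to handle the four inequalities in turn, all via the same toolkit: (i) the tail estimate $\P(|X_1|>u)\asymp u^{-\al}$ as $u\to\infty$, which follows by integrating \eqref{eq:f}; (ii) simple union bounds; and (iii) Bennett's exponential inequality applied to the truncated summands $Y_j:=X_j\1\{|X_j|\le cx\}$. By the symmetry of the distribution of $X_1$, the $Y_j$'s are iid, mean zero, bounded by $M:=cx$, with $\sigma_1^2:=\E Y_j^2\asymp(cx)^{2-\al}$. By Remark~\ref{rem:p,tp} we may assume throughout that $c<<1$.

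Bound \eqref{eq:ge2} is the simplest: a union bound over unordered pairs gives $p_{\ge2}(n,x)\le\binom{n}{2}\P(|X_1|>cx)^2\O n^2(cx)^{-2\al}=(nx^{-\al})\cdot(nx^{-\al})c^{-2\al}$, which is $o(nx^{-\al})$ precisely by \eqref{eq:c}. This is the one place where the sharp form of \eqref{eq:c} is unavoidably used.

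For \eqref{eq:0}, on the event $\{\max_j|X_j|\le cx\}$ we have $S_n=T_n:=\sum_jY_j$, so $p_0(n,x)\le\P(T_n>x)$. Bennett's inequality is in its Poissonian regime (since $xM/(n\sigma_1^2)\asymp(cx)^\al/n\to\infty$ by \eqref{eq:c}) and yields a bound of shape $\exp(-c^{-1}\log((cx)^\al/n))$, which is superpolynomially small in $c$ and hence comfortably $o(nx^{-\al})$. For \eqref{eq:1,1,-}, conditioning on $X_i=u<-x$ forces the truncated sum $T_{n-1}^{(i)}:=\sum_{j\ne i}Y_j$ to exceed $x-u\ge 2x$; Bennett then produces an even smaller factor, and integrating gives $p_{1,1,-}(n,x)\O n\,\P(X_1<-x)\cdot\P(T_{n-1}^{(1)}>2x)=nx^{-\al}\cdot o(1)$.

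The main obstacle is \eqref{eq:1,0}. Writing the event as a union over $i$ and conditioning on $X_i=u$, one needs to show $n\int_{cx<|u|\le x}f(u)\P(T_{n-1}>x-u)\,du=o(nx^{-\al})$. The difficulty is the boundary strip $u\in((1-c)x,x]$, where the required deviation $x-u<cx$ is too small for any concentration bound to be useful. I would split the integral. On this boundary, bound $\P(T_{n-1}>x-u)\le 1$; since $\int_{(1-c)x}^x f(u)\,du\O cx^{-\al}$, the boundary contribution is $\O ncx^{-\al}=o(nx^{-\al})$ because $c<<1$. On the bulk region $|u|\in(cx,(1-c)x]$ together with $u\in[-x,-cx)$, we have $x-u\ge cx$ and Bennett yields the uniform estimate $\P(T_{n-1}>y)\O n(cx)^{-\al}$ for all $y\ge cx$ (the bound equals $n/(cx)^\al$ at $y=cx$ and is decreasing in $y$). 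The bulk contribution is then $\O n\cdot(cx)^{-\al}\cdot n(cx)^{-\al}=(nx^{-\al})^2c^{-2\al}=o(nx^{-\al})$, once more by \eqref{eq:c}. The main technical point is verifying the uniform Bennett estimate at the edge of its Poissonian regime, which is essentially the same calculation as for \eqref{eq:0}.
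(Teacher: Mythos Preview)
Your argument is correct, and for \eqref{eq:ge2} and \eqref{eq:1,1,-} it is essentially the paper's argument. The two substantive differences are in the tool used for the truncated sums and in the treatment of $p_{1,0}$.

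The paper never invokes Bennett; it uses only Chebyshev on $T_n=\sum_j Y_j$, obtaining $\P(T_n>x)\le n\E Y_1^2/x^2\O c^{2-\al}\,nx^{-\al}<<nx^{-\al}$ directly. Your Bennett bound for \eqref{eq:0} is stronger but unnecessary, and the ``superpolynomially small in $c$'' phrasing obscures what is really needed, namely comparison with $nx^{-\al}$; Chebyshev makes this comparison a one-liner. Likewise, your ``uniform Bennett estimate'' $\P(T_{n-1}>y)\O n(cx)^{-\al}$ for $y\ge cx$ is already delivered by Chebyshev at $y=cx$.

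For \eqref{eq:1,0} the paper keeps the pointwise Chebyshev bound $\min\bigl(1,\,n(cx)^{2-\al}/(x-u)^2\bigr)$ and splits the integral into three pieces at $x/2$ and at $u_x:=x-n^{1/2}(cx)^{1-\al/2}$. Your two-region split---trivial bound on $((1-c)x,x]$ giving $n\!\int_{(1-c)x}^{x}f\O ncx^{-\al}$, and the uniform bound $n(cx)^{-\al}$ on the rest giving $n\cdot(cx)^{-\al}\cdot n(cx)^{-\al}=(nx^{-\al})^2c^{-2\al}$---is coarser but still $o(nx^{-\al})$, thanks precisely to \eqref{eq:c} and $c<<1$. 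This is a genuine simplification: you are trading the finer pointwise estimate for a second use of \eqref{eq:c}, and under the stated hypotheses that trade costs nothing. The paper's three-way split would matter only if one sought sharper dependence on $c$ than \eqref{eq:c} provides.
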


\begin{proof}
For all natural $i$, let  
\begin{equation*}
	Y_i:=X_i\,\1(|X_i|\le cx), 
\end{equation*}
where $\1(A)$ denotes the indicator of an assertion $A$, so that $\1(A)=1$ if $A$ is true, and $\1(A)=0$ if $A$ is false. Then the $Y_i$'s are independent identically distributed 
symmetric random variables. 
Also, by \eqref{eq:c} 
and 
\eqref{eq:LD}, 
$(cx)^{2\al}>>nx^{\al}
>>1$, so that \break 
$cx>>1$. 
Therefore, in view of \eqref{eq:f}, for some real $A>0$ we have 
\begin{equation*}
	\E Y_1^2
	\O\int_0^A u^2 f(u)\,du+\int_A^{cx}u^2 u^{-1-\al}\,du\asymp(cx)^{2-\al}. 
\end{equation*} 
Therefore, with 
\begin{equation*}
T_n:=\sum_1^n Y_i,	
\end{equation*}
by \eqref{eq:p0}, 
Markov's inequality, and \eqref{eq:c<<1},  
\begin{equation}\label{eq:p0<}
	p_0(n,x)\le\P(T_n>x)\le\frac{\E T_n^2}{x^2}=\frac{n\E Y_1^2}{x^2}
	\O c^{2-\al}\frac n{x^\al}<<
	nx^{-\al}.   
\end{equation}
So, \eqref{eq:0} is proved. 

Next, by \eqref{eq:p2}, \eqref{eq:p1}, 
and \eqref{eq:c}, 
\begin{equation*}
	p_{\ge2}(n,x)
	\le\binom n2\P(|X_1|>cx,|X_2|>cx) 
	\O n^2 (cx)^{-2\al}
	<<nx^{-\al}, 
\end{equation*}
which proves \eqref{eq:ge2}. 

Further, using \eqref{eq:p10}, \eqref{eq:f}, and Markov's inequality as in \eqref{eq:p0<}, we have 
\begin{equation}\label{eq:p_{1,0}=}
\begin{aligned}
	p_{1,0}(n,x)&=n\P(S_n>x,\ cx<|X_1|\le x, |X_2|\le cx,\dots,|X_n|\le cx) \\ 
&\le	n\P(cx<|X_1|\le x, Y_2+\cdots+Y_n>x-X_1) \\ 
&\asymp n\int_{cx}^x 
u^{-1-\al}\P(Y_2+\cdots+Y_n>x-u)\,du
\O I,
\end{aligned} 
\end{equation}
where 
\begin{equation*}
	I:=\int_{cx}^x 
	g(u)\,du,
	\quad g(u):=nu^{-1-\al}\min\Big(1,\frac{n(cx)^{2-\al}}{(x-u)^2}\Big). 
\end{equation*}
Next, 
\begin{equation}\label{eq:u_x}
	u_x:=x-n^{1/2}(cx)^{1-\al/2}
	\sim x,
\end{equation}
by 
conditions \eqref{eq:c} and \eqref{eq:c<<1} on $c$. 
It follows that 
\begin{equation*}
I=I_1+I_2+I_3, 
\end{equation*}
where 
\begin{multline*}
	I_1:=\int_{cx}^{x/2} 
	g(u)\,du
	\le\int_{cx}^\infty 
	nu^{-1-\al}\frac{n(cx)^{2-\al}}{(x/2)^2}\,du 
	\asymp\Big(\frac n{x^\al}\Big)^2 c^{2-2\al} 
	<<nx^{-\al},
\end{multline*}
again by 
the mentioned conditions on $c$;  
\begin{multline*}
	I_2:=\int_{x/2}^{u_x} 
	g(u)\,du
	\le\int_{-\infty}^{u_x} 
	n(x/2)^{-1-\al}\frac{n(cx)^{2-\al}}{(x-u)^2}\,du \\ 
	\asymp\Big(\frac n{x^\al}\Big)^{3/2} c^{1-\al/2} 
	<<nx^{-\al},
\end{multline*}
once again by 
the conditions on $c$; and, in view of 
the definition of $u_x$ in \eqref{eq:u_x}, 
\begin{multline*}
	I_3:=\int_{u_x}^x 
	g(u)\,du\le(x-u_x)nu_x^{-1-\al}
	\asymp\Big(\frac n{x^\al}\Big)^{3/2} c^{1-\al/2} 
	<<
	nx^{-\al},
\end{multline*}
as in 
the bounding of $I_2$. 
%
So, 
the bound on $p_{1,0}(n,x)$ in \eqref{eq:1,0} follows immediately from \eqref{eq:p_{1,0}=} and the bounds on the integrals $I_1,I_2,I_3$.

Finally,  
in view of 
the definition of $p_{1,1,-}(n,x)$ in \eqref{eq:p11-}, 
\begin{align}
	p_{1,1,-}(n,x)&=n\P\Big(S_n>x,\,X_1<-x,\,\max_{j\in[n]\setminus\{1\}}|X_j|\le cx\big\}\Big) \notag \\ 
	&\le n\P\Big(S_n-X_1>x,\,X_1<-x,\,\max_{j\in[n]\setminus\{1\}}|X_j|\le cx\big\}\Big) \notag
	\\ 
	&\le n\P(T_n-Y_1>x,\,X_1<-x) \notag \\ 
	&=\P(T_n-Y_1>x)\,n\P(X_1<-x) 
	\O\frac{nc^{2-\al}}{x^\al}\frac n{x^\al}<<nx^{-\al}. 
	\notag
\end{align}
\newpage
\noindent
\nopagebreak 
The 
$\O$ comparison 
here is obtained by bounding $\P(T_n-Y_1>x)$ similarly to the bounding of $\P(T_n>x)$ in \eqref{eq:p0<} and using 
the symmetry of the distribution of $X_1$, 
the condition $x\to\infty$, and the relation \eqref{eq:p1}; 
the $<<$ comparison in 
the above multiline display follows, 
yet again, by 
the conditions on $c$. 
So, \eqref{eq:1,1,-} is proved as well. 

This completes the proof of Lemma~\ref{lem:<<}. 
\end{proof}


Now we can complete the proof of Theorem~\ref{th:}. 
Note that 
\begin{equation*}
	\P(S_n>x)=p_0(n,x)+p_{\ge2}(n,x)+p_{1,0}(n,x)+p_{1,1,-}(n,x)+p_{1,1,+}(n,x). 
\end{equation*}
So, by 
Lemmas~\ref{lem:<<} and \ref{lem:>}, 
\begin{equation}\label{eq:sim p_{1,1,+}(n,x)}
	\P(S_n>x)\sim
	p_{1,1,+}(n,x).
\end{equation}

Finally, 
the difference between $
p_{1,1,+}(n,x)$ and the probability on the right-hand side of \eqref{eq:sim} is 
%
\begin{align}
	&\le n\P\Big(X_1>x,|S_n-X_1|> bx,\,\max_{j\in[n]\setminus\{1\}}|X_j|\le cx\big\}\Big) \notag \\ 
	&\le n\P(X_1>x)\P(|T_n-Y_1|> bx) \notag \\ 
	&\O \P(S_n>x)\frac{n(cx)^{2-\al}}{(bx)^2}
	<<\P(S_n>x); 
	\notag
\end{align}
the 
$\O$ comparison 
here is obtained using the $\OG$ comparison in \eqref{eq:>} and 
bounding $\P(T_n-Y_1>bx)$ similarly to the bounding of $\P(T_n>x)$ in \eqref{eq:p0<}; 
and the 
latter $<<$ comparison 
follows by \eqref{eq:b}. 
%
Now \eqref{eq:sim} follows from \eqref{eq:sim p_{1,1,+}(n,x)}. 

The proof of Theorem~\ref{th:} is complete. 
\end{proof}

\nopagebreak
\needspace{48pt}
\nopagebreak
\begin{proof}[Proof of Proposition~\ref{prop:}]
Suppose first that condition \eqref{eq:LD} holds. Then, by \eqref{eq:sim p_{1,1,+}(n,x)}, \eqref{eq:tp}, 
and \eqref{eq:>}, 
$\P(S_n>x)\sim	p_{1,1,+}(n,x)\le n\P(X_1>x)\asymp nx^{-\al}
\O\P(S_n>x)$, 
so that 
$\P(S_n>x)\asymp nx^{-\al}\to0$. 

On the other hand, if $\P(S_n>x)\to0$, then, by the inequality  $\P(S_n>x)\ge\tfrac14\,(1-e^{-2n\P(X_1>x)})$ in the proof of Lemma~\ref{lem:>}, we have $n\P(X_1>x)\to0$, and hence $\P(X_1>x)\to0$ and $x\to\infty$. So, $\P(S_n>x)\OG n\P(X_1>x)\asymp nx^{-\al}$, by \eqref{eq:p1}. Thus, $\P(S_n>x)\to0$ implies \eqref{eq:LD}, which in turn implies 
$\P(S_n>x)\asymp nx^{-\al}\to0$. 
\end{proof}

\def\cprime{$'$} \def\polhk#1{\setbox0=\hbox{#1}{\ooalign{\hidewidth
  \lower1.5ex\hbox{`}\hidewidth\crcr\unhbox0}}}
  \def\polhk#1{\setbox0=\hbox{#1}{\ooalign{\hidewidth
  \lower1.5ex\hbox{`}\hidewidth\crcr\unhbox0}}}
  \def\polhk#1{\setbox0=\hbox{#1}{\ooalign{\hidewidth
  \lower1.5ex\hbox{`}\hidewidth\crcr\unhbox0}}} \def\cprime{$'$}
  \def\polhk#1{\setbox0=\hbox{#1}{\ooalign{\hidewidth
  \lower1.5ex\hbox{`}\hidewidth\crcr\unhbox0}}} \def\cprime{$'$}
  \def\polhk#1{\setbox0=\hbox{#1}{\ooalign{\hidewidth
  \lower1.5ex\hbox{`}\hidewidth\crcr\unhbox0}}} \def\cprime{$'$}
  \def\cprime{$'$}

\nopagebreak
\nopagebreak
\begin{affil}
Department of Mathematical Sciences, 
Michigan Technological University, 
Houghton, Michigan 49931\\
ipinelis@mtu.edu
\end{affil}

\end{document}